\theoremstyle{plain}
\newtheorem{thm}{\protect\theoremname}
\theoremstyle{plain}
\newtheorem{cor}[thm]{\protect\corollaryname}
\theoremstyle{plain}
\newtheorem{prop}[thm]{\protect\propositionname}
\theoremstyle{remark}
\newtheorem{rem}[thm]{\protect\remarkname}
\let\myFoot\footnote
\renewcommand{\footnote}[1]{\myFoot{#1\vspace{3mm}}}
\providecommand{\corollaryname}{Corollary}
\providecommand{\propositionname}{Proposition}
\providecommand{\remarkname}{Remark}
\providecommand{\theoremname}{Theorem}
\begin{document}
\title{A note on a deterministic property to obtain the long run behavior of the range of a stochastic process}
\author{Maher Boudabra\footnote{Mathematics department, King Fahd University for Petroleum and Minerals, KSA} , Binghao Wu\footnote{School of Mathematics, Monash University, Australia}}
\maketitle
\begin{abstract}
A Brownian motion with drift is simply a process $V^{\eta}_t$ of the
form $V^{\eta}_t=B_{t}+\eta t$ where $B_{t}$ is a standard Brownian
motion and $\eta>0$ \footnote{The case $\eta<0$ is deducible by remarking $V^{-\eta}(t)=-V^{\eta}(t)$.}
In \cite{tanre2006range}, the authors considered the drifted
Brownian motion and studied the statistics of some related sequences
defined by certain stopping times. In particular, they provided the
law of the range $R_{t}(V^{\eta})$ of $V^{\eta}$ as well as its
first range process $\theta_{V^{\eta}}(a)$. In particular, they investigated
the asymptotic comportment of $R_{t}(V^{\eta})$ and $\theta_{V^{\eta}}(a)$.
They proved that if $V_{t}^{\eta}$ is a Brownian motion with a positive
drift $\eta$ then its range $R_{t}(V^{\eta})=\sup_{0\leq s\leq t}V_{t}^{\eta}-\inf_{0\leq s\leq t}V_{t}^{\eta}$
is asymptotically equivalent to $\eta t$. In other words 

\begin{equation}
\frac{R_{t}(V^{\eta})}{t}\overset{a.e}{\underset{t\rightarrow\infty}{\longrightarrow}}\eta.\label{range}
\end{equation}
In this paper, we show that (\ref{range}) follows from a striking
deterministic property. More precisely, we show that the long run
behavior of the range of a deterministic continuous function is obtainable
straightaway from that of the function itself. Our result can be deemed
as the continuous version of a similar one appeared in \cite{mgrw}. 
\end{abstract}

\section{Introduction}

The range of a real valued process $X=(X_{t})_{t\in T}$ is defined
to be the "measure" (in a certain sense) of the
set of values generated by this process up to time $t$. We shall
denote the range of the process $X$ by $R_{t}(X)$. When $X$ is
discrete, i.e $X=(X_{n})_{n\in\mathbb{N}}$, $R_{n}(X)$ simplifies
to the number of explored sites up to time $n$ by the process $X$,
i.e $R_{n}(X)=\vert\{X_{0},...,X_{n}\}\vert$. The range of discrete
processes (random walks, Markov chains etc...) has been investigated
in several seminal works \cite{feller1951asymptotic,glynn1985range,chong2000ruin,vallois1996range,vallois1997range}.
In particular, we cite Kesten-Spitzer-Whitman theorem \cite{spitzer2001principles}
which says that the average of the range of a nearest neighbor simple
random walk (referred to as its speed in \cite{mgrw}) converges to
the probability of never returning to the origin (the starting point
more generally). That is, let $(\xi_{n})_{n}$ be a sequence of i.i.d
Redmacher random variables of parameter $p$ and set $X_{n}=\xi_{1}+...+\xi_{n}$.
Then 
\begin{equation}
\frac{R_{n}(X)}{n}\overset{a.e}{\underset{n\rightarrow\infty}{\longrightarrow}}\mathbb{P}(X\,\,\text{never returns to the origin}).\label{range X_n}
\end{equation}
Note that $\mathbb{P}(X\,\,\text{never returns to the origin})$ is simply
$2p-1=\vert\mathbb{E}(X_{1})\vert$. In \cite{mgrw}, the authors
showed that (\ref{range X_n}) is a deterministic property, and no
requirement to involve randomness to get it. In fact they showed the
following result.
\begin{thm}
\label{the mgrw} \cite{mgrw} if $x=(x_{n})_{n}$ is any deterministic
sequence mimicking a nearest neighbor random walk, i.e $\vert x_{n+1}-x_{n}\vert\leq1$
for all $n$, then if 
\[
\frac{x_{n}}{n}\underset{{\scriptscriptstyle n\rightarrow+\infty}}{\longrightarrow}\ell
\]
then 
\[
\frac{R_{n}(x)}{n}\underset{{\scriptscriptstyle n\rightarrow+\infty}}{\longrightarrow}\vert\ell\vert.
\]
\end{thm}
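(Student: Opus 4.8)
The plan is to reduce the statement to the asymptotics of the running extrema and then exploit the fact that a nearest-neighbor path cannot skip a site. Writing $M_{n}=\max_{0\le k\le n}x_{k}$ and $m_{n}=\min_{0\le k\le n}x_{k}$, the hypothesis $\vert x_{k+1}-x_{k}\vert\le 1$ (with the $x_{k}$ taking integer values, as is implicit in counting visited sites) means the path moves by steps in $\{-1,0,1\}$, so it cannot jump over an integer. Hence the set of explored sites is exactly the full integer interval $\{m_{n},m_{n}+1,\dots,M_{n}\}$, giving $R_{n}(x)=M_{n}-m_{n}+1$. (If instead one reads the range as the diameter $M_{n}-m_{n}$, this combinatorial step is unnecessary and the argument below works verbatim for real-valued $x$.) Since the additive constant is killed by dividing by $n$, it suffices to prove $\tfrac{M_{n}-m_{n}}{n}\to\vert\ell\vert$. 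Moreover, replacing $x$ by $-x$ leaves $R_{n}$ unchanged and sends $\ell$ to $-\ell$, so I may assume $\ell\ge 0$ throughout.

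Next I would establish $\tfrac{M_{n}}{n}\to\ell$. The lower bound is immediate from $M_{n}\ge x_{n}$, which yields $\liminf_{n} M_{n}/n\ge\ell$. For the matching upper bound, fix $\varepsilon>0$; since $x_{k}/k\to\ell$ there is an $N$ with $x_{k}\le(\ell+\varepsilon)k$ for all $k\ge N$, while the finitely many terms $x_{0},\dots,x_{N-1}$ are bounded by some constant. Taking the maximum over $k\le n$ gives $M_{n}\le(\ell+\varepsilon)n$ for all large $n$, so $\limsup_{n} M_{n}/n\le\ell+\varepsilon$, and letting $\varepsilon\downarrow 0$ gives $\limsup_{n} M_{n}/n\le\ell$. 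Combining the two bounds proves $M_{n}/n\to\ell$.

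The crux is controlling $m_{n}$, and this is where the two regimes split. If $\ell>0$ then $x_{n}\to+\infty$, so the sequence is bounded below and $m_{n}$ is eventually the (finite) global minimum; hence $m_{n}/n\to 0$ and $\tfrac{M_{n}-m_{n}}{n}\to\ell=\vert\ell\vert$. If $\ell=0$ neither extreme dominates, so I would run the same Cesàro-type estimate on both sides: for every $\varepsilon>0$ and all large $n$ one has $-\varepsilon n\le m_{n}\le x_{0}$ and $x_{n}\le M_{n}\le\varepsilon n$, whence $0\le\tfrac{M_{n}-m_{n}}{n}\le 2\varepsilon$, forcing the ratio to $0=\vert\ell\vert$. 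The one genuinely conceptual step — the part I would flag as the main obstacle — is the lattice-filling observation that converts the combinatorial cardinality $R_{n}(x)$ into the analytic quantity $M_{n}-m_{n}$; once that bridge is in place, the result follows from elementary limit estimates for the running maximum and minimum of a sequence with a prescribed linear (or sublinear) growth rate.
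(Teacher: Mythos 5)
Your argument is correct. Note that the paper itself does not prove Theorem \ref{the mgrw} in place --- it is quoted from \cite{mgrw} --- and the route the paper later sketches for it is indirect: linearly interpolate $(x_{n})$ to a continuous function $x^{\star}$, apply the continuous-time Theorems \ref{Thm} and \ref{Thm-1} with $\psi(t)=t$, and transfer the conclusion back via the bound $\vert R_{t}(x^{\star})-r_{\lfloor t\rfloor}\vert\leq2$. Your proof is instead a direct discrete argument, but its analytic core coincides with the paper's proofs of Theorems \ref{Thm} and \ref{Thm-1}: trap $x_{k}$ between $(\ell\pm\varepsilon)k$ beyond a threshold $N$, absorb the finitely many initial terms into an $o(n)$ contribution, and observe that when $\ell\neq0$ one of the two running extrema stabilizes while the other tracks $\ell n$, whereas when $\ell=0$ both are $O(\varepsilon n)$. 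The ingredient you make explicit that the paper leaves implicit is the lattice-filling identity $R_{n}(x)=M_{n}-m_{n}+1$, which is precisely what reconciles the cardinality definition of the discrete range with the sup-minus-inf definition used in continuous time (and is what is hiding inside the paper's inequality $\vert R_{t}(x^{\star})-r_{\lfloor t\rfloor}\vert\leq2$). Your direct route buys self-containedness and the observation that the nearest-neighbour hypothesis is used only in that single combinatorial step; the paper's route buys the discrete statement for free once the continuous one is in hand.
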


Theorem \eqref{the mgrw} shows that the asymptotic behavior of the
range of any nearest neighbor random walk follows immediately from
the asymptotic behavior of the random walk itself. This deterministic
result covers also the case of random walks in random environments \cite{solomon1975random}. An immediate consequence of theorem \eqref{the mgrw} is that
if $X$ is a random walk for which the Birkoff ergodic theorem \cite{shiryaev2016probability} applies
then 
\begin{equation}
\frac{R_{n}(X)}{n}\underset{{\scriptscriptstyle n\rightarrow+\infty}}{\longrightarrow}\vert\mathbb{E}(X_{1})\vert.\label{range limit}
\end{equation}
\\

In continuous time, the range of a process $X=(X_{t})_{t\geq0}$ is
defined similarly. It is the Lebesgue measure of the interval $[\sup_{0\leq s\leq t}X_{s}-\inf_{0\leq s\leq t}X_{s}]$.
In other words
\begin{equation}
R_{t}(X)=\sup_{0\leq s\leq t}X_{s}-\inf_{0\leq s\leq t}X_{s}.\label{range of X_t}
\end{equation}
One should notice that the path of $R_{t}(X)$ is non-decreasing.
Thus, we can define its right continuous inverse 
\[
\theta_{X}(a):=\inf\{t\geq0\mid R_{t}(X)>a\}.
\]
 Note that $R_{t}(X)$ and $\theta_{X}(a)$ share the same monotonicity.
In addition, the following two relations encodes the duality in between
them:
\begin{itemize}
\item $R_{t}(X)<a\Longleftrightarrow\theta_{X}(a)<t.$
\item $\theta_{X}(R_{t}(X))\geq t.$
\end{itemize}
A typical process to study its range in continuous time is obviously
the Brownian motion as well as its derivatives \cite{tanre2006range,imhof1985range,vallois1995decomposing}.
The density of the range of a standard Brownian motion $B=B_t$ is computed in \cite{feller1951asymptotic} for a fixed
time $t$ while the Laplace transform of $\theta_{B}(a)$ is given
in \cite{imhof1985range,vallois1995decomposing}. In \cite{tanre2006range}, the authors considered
the case of a positive drifted Brownian motion $V_{t}^{\eta}:=B_{t}+\eta t$
where  $\eta>0$. In section
$4,$ they provided the distribution functions of $R_{t}(V^{\eta})$ and
$\theta_{V^{\eta}}(a)$. In addition, they proved two limit theorems
for $\theta_{V^{\eta}}(a)$ that can be seen as a law of large numbers
and a central limit theorem. What matters for us in this work is their
result about the long run behavior of the range. They showed that
the range of $V_{t}^{\eta}$ is asymptotically equivalent to $\eta t$
(a.e), or equivalently
\[
\frac{R_{t}(V^{\eta})}{t}\overset{a.e}{\underset{t\rightarrow\infty}{\longrightarrow}}\eta.
\]
 However, their proof is quite technical and it is based on the study
of the right continuous inverse process $\theta_{V^{\eta}}(a):=\inf\{t\geq0\mid R_{t}(V^{\eta})>a\}$,
combined with the use of the subadditive ergodic theorem. Their approach
will be discussed in the sequel. What we have discovered is that the
comportment of the range of $V_{t}^{\eta}$ at infinity does not require
that whole machinery to obtain. In fact, it is a deterministic property
that follows directly from the behavior of the underlying process.
More generally, we show that the long run behavior of the range of
a function $f:[0,\infty)\rightarrow\mathbb{R}$ is deducible from
the long run behavior of the function itself. To this end, recall
that the range of $f$ is defined exactly as in (\ref{range of X_t}),
i.e 
\[
R_{t}(f)=\sup_{0\leq s\leq t}f_{s}-\inf_{0\leq s\leq t}f_{s}.
\]
When $f$ is continuous then its range path is also continuous. Moreover,
when $f$ s monotonic then its range increases over the time. Finally
note that 
\begin{equation}
R_{t}(f)=R_{t}(-f).\label{range under symmetry}
\end{equation}
\\
\section{Results and applications}
Now, we are ready to state the main result of the paper which is inspired from the work in \cite{mgrw} but has more consequences.
\begin{thm}
\label{Thm} Let $f,\psi:[0,\infty)\rightarrow\mathbb{R}$ be two
functions such that $\psi(t)$ is positively increasing to infinity.
If 
\begin{equation}
\frac{f(t)}{\psi(t)}\underset{t\rightarrow\infty}{\longrightarrow}\ell\not = 0\label{f/psi}
\end{equation}
then 
\[
\frac{R_{t}(f)}{\psi(t)}\underset{t\rightarrow\infty}{\longrightarrow}\vert\ell\vert.
\]
\end{thm}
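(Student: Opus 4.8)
The plan is to reduce the statement to the separate analysis of the running supremum and running infimum of $f$. Write $M(t) := \sup_{0 \le s \le t} f(s)$ and $m(t) := \inf_{0 \le s \le t} f(s)$, so that $R_t(f) = M(t) - m(t)$; here I tacitly use that $f$ is locally bounded (which is automatic for the continuous functions, such as drifted Brownian paths, motivating the statement), ensuring $M(t)$ and $m(t)$ are finite. Using the symmetry \eqref{range under symmetry}, namely $R_t(f) = R_t(-f)$, I may assume without loss of generality that $\ell > 0$: if $\ell < 0$ I replace $f$ by $-f$, for which $-f/\psi \to -\ell > 0$ while the range is unchanged and $-\ell = |\ell|$. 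Since $\psi(t) \to \infty$ and $\ell > 0$, hypothesis \eqref{f/psi} forces $f(t) = \tfrac{f(t)}{\psi(t)}\,\psi(t) \to +\infty$.

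First I would dispose of the infimum. Because $f(t) \to +\infty$, there is a time $T_0$ beyond which $f$ is nonnegative; combined with local boundedness on the compact $[0,T_0]$, this shows $f$ is bounded below on all of $[0,\infty)$. Hence $m(t)$ is nonincreasing and bounded, so it converges to a finite limit, and therefore $m(t)/\psi(t) \to 0$ as $\psi(t) \to \infty$.

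The heart of the argument is to show $M(t)/\psi(t) \to \ell$. Fix $\epsilon \in (0,\ell)$ and choose $T$ with $(\ell - \epsilon)\psi(s) \le f(s) \le (\ell + \epsilon)\psi(s)$ for all $s \ge T$. For the lower bound, $M(t) \ge f(t) \ge (\ell - \epsilon)\psi(t)$ whenever $t \ge T$, giving $\liminf_t M(t)/\psi(t) \ge \ell - \epsilon$. For the upper bound I split $M(t) = \max\{\sup_{0 \le s \le T} f(s),\ \sup_{T \le s \le t} f(s)\}$. The first term is a fixed constant $C_T$, while in the second the monotonicity of $\psi$ yields $f(s) \le (\ell + \epsilon)\psi(s) \le (\ell + \epsilon)\psi(t)$ for $T \le s \le t$; thus $M(t) \le \max\{C_T,\ (\ell + \epsilon)\psi(t)\}$, which equals $(\ell + \epsilon)\psi(t)$ once $\psi(t) > C_T/(\ell + \epsilon)$. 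Hence $\limsup_t M(t)/\psi(t) \le \ell + \epsilon$, and letting $\epsilon \downarrow 0$ gives $M(t)/\psi(t) \to \ell$. Combining the two estimates yields $R_t(f)/\psi(t) = M(t)/\psi(t) - m(t)/\psi(t) \to \ell = |\ell|$.

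The step I expect to demand the most care is the upper bound on the supremum, since this is exactly where the hypothesis that $\psi$ is increasing is indispensable: it converts the pointwise control $f(s) \le (\ell + \epsilon)\psi(s)$ for $s \le t$ into the uniform bound $(\ell + \epsilon)\psi(t)$. The other delicate point is the initial segment $[0,T]$, where \eqref{f/psi} gives no information; this is absorbed harmlessly into the constant $C_T$ precisely because $\psi(t) \to \infty$ swamps any fixed constant. Finally, to recover \eqref{range} for $V_t^{\eta} = B_t + \eta t$, I would apply the theorem pathwise with $\psi(t) = t$ and $\ell = \eta$, using that $B_t/t \to 0$ almost surely so that $V_t^{\eta}/t \to \eta$ almost surely.
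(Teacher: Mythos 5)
Your proof is correct and follows essentially the same route as the paper's: reduce to $\ell>0$ via $R_t(f)=R_t(-f)$, trap $f$ between $(\ell\pm\varepsilon)\psi(t)$ beyond some $T$, and let $\psi(t)\to\infty$ absorb the contribution of the initial segment $[0,T]$. Your explicit decomposition of $R_t(f)$ into running supremum and infimum, and your observation that $f$ must be locally bounded for $R_t(f)$ to be finite, merely spell out steps the paper leaves implicit.
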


\begin{proof}
Without loss of generality we may assume $\ell$ is positive thanks
to (\ref{range under symmetry}). Let $\varepsilon\in(0,\ell)$. The
assumption (\ref{f/psi}) yields 
\begin{equation}
\psi(t)(\ell-\varepsilon)\leq f(t)\leq\psi(t)(\ell+\varepsilon)\label{R_t}
\end{equation}
for $t$ greater than some positive $T$. As $\psi(t)$ is positive
then $\psi(t)(\ell-\varepsilon)\leq R_{t}(f)$. On the other hand,
we have 
\begin{equation}
R_{t}(f)\leq R_{T}(f)+\psi(t)(\ell+\varepsilon)\label{R_T}
\end{equation}
since $f(t)>0$ for $t>T$. By combining (\ref{R_t}) and (\ref{R_T})
we get 
\[
-2\varepsilon\leq\frac{R_{t}(f)}{\psi(t)}-\ell\leq\frac{R_{T}(f)}{\psi(t)}+\varepsilon.
\]
Since $\frac{R_{T}(f)}{\psi(t)}$ will vanish at infinity, we can
find $T^{\sharp}\geq T$ such that 
\[
-2\varepsilon\leq\frac{R_{t}(f)}{\psi(t)}-\ell\leq2\varepsilon
\]
for all $t$ greater than $T^{\sharp}$ which completes the proof
by taking into account the case $\ell<0$. 
\end{proof}
Note that theorem (\ref{Thm}) covers also the case when $\ell$ is
infinite. This is done by a minor change in the previous proof. Using
the same assumptions and notations of theorem (\ref{Thm}), the case
when $\ell$ is zero is similar and it is the subject of the next
theorem. 
\begin{thm}
\label{Thm-1} If 
\begin{equation}
\frac{f(t)}{\psi(t)}\underset{t\rightarrow\infty}{\longrightarrow}0\label{f/psi-1}
\end{equation}
then 
\[
\frac{R_{t}(f)}{\psi(t)}\underset{t\rightarrow\infty}{\longrightarrow}0.
\]
\end{thm}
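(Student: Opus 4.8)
The plan is to follow the skeleton of the proof of Theorem~\ref{Thm}, but with one genuine modification forced by $\ell=0$: there the author could assume (via \eqref{range under symmetry}) that $f$ is eventually positive, which effectively froze the running infimum and reduced $R_t(f)$ to $R_T(f)$ plus a single tail term. With $\ell=0$ that sign information is gone, so I expect $f$ may oscillate across zero indefinitely, and both the running supremum and the running infimum may keep drifting. The remedy is to control the tail behaviour of \emph{both} extrema symmetrically, using the monotonicity of $\psi$, and to absorb the initial segment into a fixed constant that $\psi(t)\to\infty$ renders negligible.

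Concretely, I would first translate hypothesis \eqref{f/psi-1}: fix $\varepsilon>0$; then there is some $T>0$ with $|f(s)|\le\varepsilon\,\psi(s)$ for all $s>T$. Because $\psi$ is increasing, $\psi(s)\le\psi(t)$ whenever $T<s\le t$, so this upgrades to the uniform tail bound $|f(s)|\le\varepsilon\,\psi(t)$ on the whole interval $(T,t]$. Next I would split the running supremum and infimum over $[0,t]$ into their contributions over the fixed head $[0,T]$ and over the tail $(T,t]$. Writing $M_T=\sup_{0\le s\le T}f(s)$ and $m_T=\inf_{0\le s\le T}f(s)$, the tail contributions are squeezed into $[-\varepsilon\psi(t),\varepsilon\psi(t)]$, while the head contributions are the constants $M_T$ and $m_T$. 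Combining the two one-sided estimates gives
\[
0\le R_t(f)\le \bigl(|M_T|+|m_T|\bigr)+2\varepsilon\,\psi(t),
\]
where the lower bound is just the non-negativity of the range.

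To finish, I would divide by $\psi(t)$. Since $T$ (hence $M_T,m_T$) is fixed once $\varepsilon$ is chosen, and $\psi(t)\to\infty$, the term $\bigl(|M_T|+|m_T|\bigr)/\psi(t)$ vanishes, so there is $T^{\sharp}\ge T$ with $0\le R_t(f)/\psi(t)\le 3\varepsilon$ for all $t>T^{\sharp}$. As $\varepsilon$ was arbitrary, the ratio tends to $0$, which is the claim.

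The main obstacle is precisely the loss of the sign of $f$ that was available in Theorem~\ref{Thm}. There positivity of $f$ past $T$ let one keep the infimum pinned and bound $R_t(f)$ by $R_T(f)$ plus one tail term; here the symmetric two-sided tail estimate $|f(s)|\le\varepsilon\psi(t)$, made uniform by the monotonicity of $\psi$, is what replaces that step. Everything else is the same elementary $\varepsilon$--$T$ bookkeeping, and no appeal to the inverse process $\theta_f$ or to any ergodic machinery is needed.
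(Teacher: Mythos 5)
Your proof is correct and takes essentially the same route as the paper's: split at the threshold time $T$, use the two-sided bound $|f(s)|\le\varepsilon\psi(t)$ on the tail (the paper's factor $2$ in $R_t(f)\le R_T(f)+2\varepsilon\psi(t)$ plays exactly the role of your symmetric control of both extrema), and let $\psi(t)\to\infty$ absorb the head term. Your only cosmetic deviation is writing the head contribution as $|M_T|+|m_T|$ rather than $R_T(f)$.
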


\begin{proof}
The assumption (\ref{f/psi-1}) yields 
\begin{equation}
-\psi(t)\varepsilon\leq f(t)\leq\psi(t)\varepsilon\label{R_t-1}
\end{equation}
for $t$ greater than some positive $T$. In particular we get 
\begin{equation}
R_{t}(f)\leq R_{T}(f)+2\psi(t)\varepsilon\label{R_T-1}
\end{equation}
The inequality (\ref{R_T-1}) yields 
\[
\frac{R_{t}(f)}{\psi(t)}\leq\frac{R_{T}(f)}{\psi(t)}+2\varepsilon.
\]
Since $\frac{R_{T}(f)}{\psi(t)}$ will vanish at infinity, we can
find $T^{\dagger}\geq T$ such that 
\[
0\leq\frac{R_{t}(f)}{\psi(t)}\leq3\varepsilon
\]
for all $t$ greater than $T^{\dagger}$ which completes the proof. 
\end{proof}
The long run behavior of the the range of the drifted Brownian motion
$V^{\eta}_t$ follows easily by looking at the behavior of $V^{\eta}_t$
itself followed by applying theorems (\ref{Thm}) and (\ref{Thm-1}). 
\begin{cor}
\label{Brownian drift} If $V_{t}^{\eta}$ is a Brownian motion with
a drift $\eta$ (possibly $\eta=0$) then 
\[
\frac{R_{t}(V^{\eta})}{t}\overset{a.e}{\underset{t\rightarrow\infty}{\longrightarrow}}\vert\eta\vert.
\]
\end{cor}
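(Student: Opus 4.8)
The plan is to obtain the corollary as an immediate \emph{pathwise} consequence of Theorems \ref{Thm} and \ref{Thm-1}, the single probabilistic ingredient being the almost-sure rate of growth of $V_t^\eta$. Concretely, I would take $\psi(t)=t$, which is positive, increasing and tends to infinity, and then read the two deterministic theorems sample path by sample path.

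First I would isolate the only place where randomness enters. Writing $V_t^\eta=B_t+\eta t$ gives
\[
\frac{V_t^\eta}{t}=\frac{B_t}{t}+\eta,
\]
so the whole question reduces to the behaviour of $B_t/t$. The strong law of large numbers for Brownian motion (equivalently, a consequence of the law of the iterated logarithm) yields
\[
\frac{B_t}{t}\underset{t\to\infty}{\longrightarrow}0\qquad\text{a.s.},
\]
and hence $V_t^\eta/t\to\eta$ almost surely. I would let $\Omega_0$ denote the probability-one event on which this convergence holds (one may also intersect it with the probability-one event on which $t\mapsto V_t^\eta$ is continuous, though continuity is not strictly needed to invoke the theorems as stated).

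Next, fixing $\omega\in\Omega_0$, I would regard $f(t):=V_t^\eta(\omega)$ as a deterministic function and apply the appropriate theorem. If $\eta\ne 0$, the hypothesis \eqref{f/psi} of Theorem \ref{Thm} holds with $\ell=\eta\ne 0$ and $\psi(t)=t$, so that theorem gives $R_t(f)/t\to|\eta|$; if $\eta=0$, the hypothesis \eqref{f/psi-1} of Theorem \ref{Thm-1} holds, giving $R_t(f)/t\to 0=|\eta|$. In either case the convergence holds for every $\omega\in\Omega_0$, which is precisely
\[
\frac{R_t(V^\eta)}{t}\overset{a.e}{\underset{t\to\infty}{\longrightarrow}}|\eta|.
\]

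The main point is conceptual rather than technical: once the deterministic Theorems \ref{Thm} and \ref{Thm-1} are available, the only genuine obstacle---the asymptotic linear growth of the path---is supplied cheaply by $B_t/t\to 0$, and the comparison between the range and the underlying process is entirely deterministic. This is exactly the streamlining advertised in the introduction, as it bypasses the inverse process $\theta_{V^\eta}$ and the subadditive ergodic theorem used in \cite{tanre2006range}.
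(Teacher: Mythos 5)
Your proposal is correct and follows essentially the same route as the paper: decompose $V_t^\eta=B_t+\eta t$, use the law of large numbers for Brownian motion to get $B_t/t\to 0$ almost surely, and then apply Theorem \ref{Thm} (for $\eta\neq 0$) or Theorem \ref{Thm-1} (for $\eta=0$) pathwise. Your version is slightly more careful in making the pathwise application explicit via the probability-one event $\Omega_0$, but the substance is identical.
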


\begin{proof}
Recall that $V_{t}^{\eta}=B_{t}+\eta t$ where $B_{t}$ is a standard
Brownian motion. Then by the law of large numbers for Brownian
motion (See \cite{morters2010brownian} for example), $B_{t}$ is almost surely
negligible against $t$, that is $\frac{B_{t}}{t}\overset{a.e}{\underset{t\rightarrow\infty}{\longrightarrow}}0$.
Thus $\frac{V_{t}^{\eta}}{t}\overset{a.e}{\underset{t\rightarrow\infty}{\longrightarrow}}\vert\eta\vert$
. Therefore by applying theorems (\ref{Thm}) and (\ref{Thm-1})
we get 
\begin{equation}
\frac{R_{t}(V^{\eta})}{t}\overset{a.e}{\underset{t\rightarrow\infty}{\longrightarrow}}\vert\eta\vert\,\,\,\,\,\,\label{range eta}
\end{equation}
\end{proof}
In particular we obtain the behavior of the range of $B_{t}$ 
\[
\frac{R_{t}(B)}{t}\overset{a.e}{\underset{t\rightarrow\infty}{\longrightarrow}}0
\]
which is not covered in \cite{tanre2006range}. The reason we think
is because the authors derived the limit (\ref{range eta}) by proving
fist the following limit of $\theta_{V^{\eta}}(a)$
\[
\frac{\theta_{V^{\eta}}(a)}{a}\overset{a.e}{\underset{a\rightarrow\infty}{\longrightarrow}}\frac{1}{\vert\eta\vert}
\]
which itself requires computations in terms of $\frac{1}{\eta}$ (valid
only when $\eta\neq0$). Then they deduced the asymptotic behavior
of the range $R_{t}(V^{\eta})$ leaning on the fact that $\theta_{V^{\eta}}(a)$
is the right continuous process of $R_{t}(V^{\eta})$. Now by theorems
(\ref{Thm}) and (\ref{Thm-1}), the asymptotic comportment of the
range of $V^{\eta}$ are obtained independently of $\theta_{V^{\eta}}(x)$
and follows exclusively from the behavior of the underlying process
$V^{\eta}_t$. 

Now we show that the behavior of $\theta_{V^{\eta}}(a)$ is obtainable
from that of $R_{t}(V^{\eta})$ and vice versa. The main tool to do
so is the following result relating the asymptotic behaviors of a
non decreasing function and its generalized inverse. For a survey
about generalized inverses, we refer the reader to \cite{de2015study}.
\begin{prop}
\label{lemma} Let $\xi:\mathbb{R}_{+}\rightarrow\mathbb{R}$ be a
non decreasing function and let $\xi^{\dagger}$ be its generalized
inverse, i.e 
\begin{equation}
\xi^{\dagger}(y)=\inf\{x\mid\xi(x)>y\}.\label{genearlized inverse}
\end{equation}
If 
\[
\frac{\xi(x)}{x}\underset{x\rightarrow\infty}{\longrightarrow}\ell
\]
then 
\[
\frac{\xi^{\dagger}(x)}{x}\underset{x\rightarrow\infty}{\longrightarrow}\frac{1}{\ell}.
\]
\end{prop}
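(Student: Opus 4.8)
The plan is to prove the statement by an elementary two-sided squeeze: convert the hypothesis $\xi(x)/x\to\ell$ into linear upper and lower envelopes for $\xi$, and then read off matching envelopes for $\xi^{\dagger}$ directly from its definition as an infimum. First I would record that, since $\xi$ is non-decreasing, it is bounded below (by $\xi(1)$, say, for $x\ge 1$), so $\liminf_{x\to\infty}\xi(x)/x\ge 0$ and hence $\ell\ge 0$; this makes $1/\ell\in[0,\infty]$ meaningful and splits the argument into the generic regime $0<\ell<\infty$ and the two boundary regimes $\ell=\infty$ and $\ell=0$.

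For the generic case, fix $\varepsilon\in(0,\ell)$ and use the hypothesis to obtain a threshold $T$ with
\[
(\ell-\varepsilon)x\le \xi(x)\le(\ell+\varepsilon)x \qquad (x\ge T).
\]
The lower envelope yields the upper bound on the inverse: whenever $x>y/(\ell-\varepsilon)$ and $x\ge T$ we have $\xi(x)\ge(\ell-\varepsilon)x>y$, so such $x$ belongs to $\{x:\xi(x)>y\}$, and taking the infimum gives $\xi^{\dagger}(y)\le y/(\ell-\varepsilon)$ once $y$ is large enough that $y/(\ell-\varepsilon)\ge T$. The upper envelope yields the lower bound: for $y>\xi(T)$, if $\xi(x)>y$ then necessarily $x>T$ (by monotonicity $\xi\le\xi(T)$ on $[0,T]$), and then $(\ell+\varepsilon)x\ge\xi(x)>y$ forces $x>y/(\ell+\varepsilon)$; hence every point of the set exceeds $y/(\ell+\varepsilon)$, so $\xi^{\dagger}(y)\ge y/(\ell+\varepsilon)$. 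Dividing through by $y$ sandwiches $\xi^{\dagger}(y)/y$ between $1/(\ell+\varepsilon)$ and $1/(\ell-\varepsilon)$ for all large $y$, and letting $\varepsilon\to0$ gives the limit $1/\ell$.

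I expect the only delicate point to be the lower bound, where the set $\{x:\xi(x)>y\}$ could in principle contain small values of $x$ for which the linear envelope is invalid; the remedy is exactly the monotonicity of $\xi$, which confines such candidates to $[0,T]$ where $\xi\le\xi(T)$, so they drop out once $y>\xi(T)$. The boundary regimes then need only half of the squeeze: when $\ell=\infty$ I would use, for each $M>0$, the eventual envelope $\xi(x)\ge Mx$ to get $\xi^{\dagger}(y)\le y/M$, whence $\limsup_{y\to\infty}\xi^{\dagger}(y)/y\le 1/M$ for every $M$ and the ratio tends to $0=1/\ell$; when $\ell=0$ I would use the eventual envelope $\xi(x)\le\varepsilon x$ (adopting the convention $\inf\emptyset=+\infty$, since $\xi$ may be bounded and the defining set empty for large $y$) to get $\xi^{\dagger}(y)\ge y/\varepsilon$, whence $\liminf_{y\to\infty}\xi^{\dagger}(y)/y\ge1/\varepsilon$ for every $\varepsilon>0$ and the ratio diverges to $\infty=1/\ell$. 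These inequalities are the asymptotic counterparts of the duality relations $R_{t}(X)<a\Leftrightarrow\theta_{X}(a)<t$ already exploited in the paper.
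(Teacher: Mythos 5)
Your proof is correct and follows essentially the same route as the paper: convert $\xi(x)/x\to\ell$ into linear envelopes for $\xi$, deduce the matching envelopes $\frac{y}{\ell+\varepsilon}\leq\xi^{\dagger}(y)\leq\frac{y}{\ell-\varepsilon}$, and take reciprocals; the paper simply asserts this squeeze as ``not hard to see'' where you supply the infimum/monotonicity argument (the threshold $y>\xi(T)$ for the lower bound) explicitly. Your treatment of the boundary regimes is in fact cleaner than the paper's, whose displayed inequality for $\ell=0$ reads $\xi^{\dagger}(x)\leq x/\varepsilon$ rather than the correct $\xi^{\dagger}(x)\geq x/\varepsilon$ needed to conclude divergence to $1/\ell=+\infty$.
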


\begin{proof}
The statement is straightforward when $\xi$ is continuously increasing
since in such a case $\xi^{\dagger}(y)=\xi^{-1}(y)$. Otherwise fix
a positive $\varepsilon$ (less than $\ell$ if $\ell>0$) and let
$\tau$ be a large number so that 
\[
x(\ell-\varepsilon)\leq\xi(x)\leq x(\ell+\varepsilon)
\]
for $x\geq\tau$. It is not hard to see that for $x>\tau$, the curve
of $\zeta$ is squeezed between $\frac{x}{\ell+\varepsilon}$ and
$\frac{x}{\ell-\varepsilon}$ \footnote{Jumps (resp. flat parts) of $\xi$ becomes flat parts (resp. jumps)
for $\zeta$}. In other words 
\begin{equation}
\begin{aligned} & \frac{x}{(\ell+\varepsilon)}\leq\xi^{\dagger}(x)\leq\frac{x}{(\ell-\varepsilon)} & \text{if \ensuremath{\ell>0}}\\
 & -\frac{x}{\varepsilon}\leq\xi^{\dagger}(x)\leq\frac{x}{\varepsilon} & \text{if \ensuremath{\ell=0}}
\end{aligned}
\label{generalized}
\end{equation}
for $x$ larger than $\tau$. The result follows by considering the
reciprocals in (\ref{generalized}). 
\end{proof}
\begin{rem}
Another common definition of the generalized inverse is $\xi^{*}(y):=\inf\{x\mid\xi(x)\geq y\}$.
proposition (\ref{lemma}) is still valid if we use $\xi^{*}$ since $\xi^{*}=\xi^{\dagger}$
a.e \footnote{It is because the set of discontinuity of a non decreasing function
is countable}. The only difference is that the formula (\ref{genearlized inverse})
generates a right continuous function while $\xi^{*}$ generates a
left continuous one. A very known usage of the left continuous inverse
is when $\xi$ represents the cumulative distribution function of some distribution
$\mu.$ In such a case, $\xi^{*}$ is used to simulate a random variable
of the same law $\mu$. More precisely 
\[
\xi^{*}(\text{Uni}(0,1))\sim\mu.
\]
That is, the uniform distribution over $(0,1)$ is mapped to a random variable of distribution $\mu$. Note also that proposition (\ref{lemma}) works in both directions since
$\left(\xi^{\dagger}\right)^{\dagger}=\xi$ a.e. In particular, $\ell$ could be infinite. 
\end{rem}

A straightforward consequence of proposition (\ref{lemma}) is the equivalence
between the comportment of $\theta_{V^{\eta}}(x)$ and $R_{t}(V^{\eta})$.
This appeared only as implication in \cite{tanre2006range}. 
\begin{prop}
We have 
\[
\frac{\theta_{V^{\eta}}(x)}{x}\overset{a.e}{\underset{x\rightarrow\infty}{\longrightarrow}}\frac{1}{\eta}.
\]
In particular

\[
\frac{\theta_{B}(x)}{x}\overset{a.e}{\underset{x\rightarrow\infty}{\longrightarrow}}+\infty.
\]
\end{prop}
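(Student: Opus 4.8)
The plan is to recognize that $\theta_{V^{\eta}}$ is precisely the generalized inverse of the range process $t\mapsto R_{t}(V^{\eta})$, so that the statement falls out by feeding corollary (\ref{Brownian drift}) into proposition (\ref{lemma}).

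First I would note that, as observed earlier, each sample path $t\mapsto R_{t}(V^{\eta})$ is non-decreasing, hence eligible for proposition (\ref{lemma}). Comparing the definition $\theta_{V^{\eta}}(a)=\inf\{t\geq0\mid R_{t}(V^{\eta})>a\}$ with the generalized inverse (\ref{genearlized inverse}), one sees that $\theta_{V^{\eta}}$ is exactly $\xi^{\dagger}$ for the choice $\xi(t)=R_{t}(V^{\eta})$.

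The substance is then a pathwise transfer. By corollary (\ref{Brownian drift}), for almost every sample path $\omega$ we have $R_{t}(V^{\eta})(\omega)/t\to\eta$ (recall $\eta>0$, so $|\eta|=\eta$). Fixing such an $\omega$, the purely deterministic proposition (\ref{lemma}) applies with $\ell=\eta$ and yields $\theta_{V^{\eta}}(x)(\omega)/x\to1/\eta$. Since this holds off a null set, the almost-everywhere convergence follows.

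For the driftless case I would simply take $\eta=0$: then $R_{t}(B)/t\to0$ almost everywhere, and proposition (\ref{lemma}) with $\ell=0$ gives $\theta_{B}(x)/x\to1/0=+\infty$. I do not anticipate any real obstacle, since the whole argument is just this pathwise application of the deterministic inverse-limit relation; the only point needing care is to fix a good sample path before invoking the deterministic proposition, which is exactly what legitimizes passing from the almost-sure convergence of the range to that of its inverse.
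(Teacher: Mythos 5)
Your proposal is correct and follows exactly the route the paper intends: the paper presents this proposition as a ``straightforward consequence'' of proposition (\ref{lemma}) applied pathwise to the non-decreasing range process, whose generalized inverse is $\theta_{V^{\eta}}$, combined with corollary (\ref{Brownian drift}). Your explicit remark about fixing a good sample path before invoking the deterministic lemma is exactly the (unstated) justification the paper relies on.
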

\newpage
The following diagram illustrates the difference between our approach
and that in \cite{tanre2006range}.

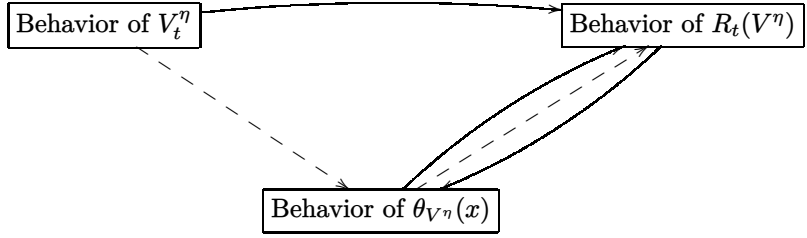
\begin{figure}[h]
\[
\xymatrix{*+[F-]{\text{Behavior of \ensuremath{V_{t}^{\eta}}}}\ar@{-->}[ddr]\ar@/^{0.7pc}/[rr] &  & *+[F-]{\text{Behavior of \ensuremath{R_{t}(V^{\eta})}}}\ar@/^{0.7pc}/[ddl]\\
\\
 & *+[F-]{\text{Behavior of \ensuremath{\theta_{V^{\eta}}(x)}}}\ar@/^{0.7pc}/[uur]\ar@{-->}[uur]
}
\]

\caption{The continuous bent arrows illustrate our approach while the dashed
ones illustrate that in \cite{tanre2006range}.}
\end{figure}

Another application of proposition (\ref{lemma}) is the recovery
of the limit theorem for renewal processes (We refer the reader to
\cite{mitov2014renewal} for more details). That is, let $N_{t}$
be a renewal counting process associated to some renewal sequence $T_{n}$
defined by 
\begin{equation}
N_{t}=\sup\{n\mid T_{n}\leq t\}.\label{renewal}
\end{equation}
Note that the RHS of (\ref{renewal}) is simply $\sum_{n=1}^{\infty}\mathbf{1}_{\{T_{n}\leq t\}}$.
The process $N_{t}$ is the generalized inverse of the renewal sequence
$T_{n}$ (the right continuous one). More precisely, $T_{n}$ is obtainable
from $N_{t}$ by 
\[
T_{n}=\inf\{t\mid N_{t}\geq n\}.
\]
That is, we recover the following equivalence 
\[
\frac{T_{n}}{n}\overset{a.e}{\underset{n\rightarrow\infty}{\longrightarrow}}\gamma\Longleftrightarrow\frac{N_{t}}{t}\overset{a.e}{\underset{t\rightarrow\infty}{\longrightarrow}}\frac{1}{\gamma}
\]
valid even in the case when the sequence of inter-arrivals $T_{n}-T_{n-1}$
is ergodic. Both theorems (\ref{Thm}) and (\ref{Thm-1}) can be seen
as the continuous version of the main result stated in \cite{mgrw}.
However, the discrete version can follow easily from the continuous
case. That is, let $\left(x_{n}\right)$ be a sequence of integers
mimicking a simple walk on $\mathbb{Z}$ and let $r_{n}$ be the range
of $\left(x_{n}\right)$, i.e the number of explored sites up to time
$n$ by $\left(x_{n}\right)$. Let $x_{t}^{\star}$ be the function
obtained from $\left(x_{n}\right)$ by connecting the dots. It is
clear that 
\[
\vert R_{t}(x^{\star})-r_{\lfloor t\rfloor}\vert\leq2.
\]
Hence, in case of existence of limits, we obtain 
\[
\lim_{n\rightarrow\infty}\frac{r_{n}}{n}=\lim_{\lfloor t\rfloor\rightarrow\infty}\frac{r_{\lfloor t\rfloor}}{\lfloor t\rfloor}=\lim_{t\rightarrow\infty}\frac{R_{t}(x^{\star})}{t}.
\]

A typical process derived from Brownian motion is the so called Bessel process.
Bessel processes are important as they interfere in financial modeling
of stock market prices etc. A $n$-dimensional Bessel process, denoted
by $BES(n)$, is defined by 
\[
BES(n)=\Vert\boldsymbol{B}_{t}\Vert
\]
where $\Vert\cdot\Vert$ is the Euclidean norm and $\mathbf{B}_{t}$
is a $n$-dimensional Brownian motion \footnote{People often add the starting point as a subscript.}.
Another alternative definition of Bessel processes uses SDE theory.
More precisely, a Bessel process is the solution of the following SDE
\[
dX_{t}=dB_{t}+\frac{n-1}{2}\frac{dt}{X_{t}}.
\]
We refer the reader to \cite{lawler2018notes} for more details about the
subject. The determination of the statistic of the range of a Bessel
process is not an easy task, not to mention 
its long run behavior.
Now, theorems (\ref{Thm}) and (\ref{Thm-1}) allows us to obtain
the asymptotic behavior of such a range by knowing only the behavior
of the process itself. However, we provide a result that covers more
than the case of Bessel processes. 
\begin{prop}
Let $\boldsymbol{B}_{t}=(\boldsymbol{B}_{t}^{(1)},\boldsymbol{B}_{t}^{(2)},...,\boldsymbol{B}_{t}^{(n)})$
be an $n$-dimensional Brownian motion and let $0<p\leq+\infty$. Then
\[
\frac{R_{t}(\Vert\boldsymbol{B}\Vert_{p})}{t}\overset{a.e}{\underset{t\rightarrow\infty}{\longrightarrow}}0\,\,\,\,\,\,(a.e)
\]
where $\Vert\boldsymbol{B}\Vert_{p}$ is the $p^{th}$ norm \footnote{ It is not a true norm when $p<1$. We use the word by abuse of of terminology.} of $\boldsymbol{B}_{t}$,
i.e
\[
\Vert\boldsymbol{B}\Vert_{p}:=\begin{cases}
\left(\vert B_{t}^{(1)}\vert^{p}+\vert B_{t}^{(2)}\vert^{p}+...+\vert B_{t}^{(n)}\vert^{p}\right)^{\frac{1}{p}} & p<+\infty\\
\max_{1\leq i\leq n}\vert\boldsymbol{B}_{t}^{(i)}\vert & p=+\infty.
\end{cases}
\]
 
\end{prop}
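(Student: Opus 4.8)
The plan is to reduce the statement to Theorem~(\ref{Thm-1}) applied pathwise with $\psi(t)=t$. Since Theorems~(\ref{Thm}) and~(\ref{Thm-1}) are purely deterministic, it suffices to work on the almost sure event on which the coordinate processes behave well and then invoke the deterministic result on each such path. Concretely, I would first argue that for almost every $\omega$ the function $t\mapsto\Vert\boldsymbol{B}_t(\omega)\Vert_p$ satisfies $\Vert\boldsymbol{B}_t\Vert_p/t\to0$, and then quote Theorem~(\ref{Thm-1}) with $f=\Vert\boldsymbol{B}\Vert_p$ and $\psi(t)=t$ (which is positive and increases to infinity) to conclude $R_t(\Vert\boldsymbol{B}\Vert_p)/t\to0$ on that event.

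The key reduction is a deterministic norm comparison that handles every $p\in(0,\infty]$ at once. For a point $x=(x_1,\ldots,x_n)\in\mathbb{R}^n$ and finite $p$ one has $\sum_i|x_i|^p\leq n\max_i|x_i|^p$, hence $\Vert x\Vert_p\leq n^{1/p}\max_i|x_i|=n^{1/p}\Vert x\Vert_\infty$; for $p=\infty$ this holds with constant $1$ under the convention $n^{1/\infty}=1$. Applying this with $x=\boldsymbol{B}_t$ gives the pathwise bound
\[
0\leq\frac{\Vert\boldsymbol{B}_t\Vert_p}{t}\leq n^{1/p}\,\max_{1\leq i\leq n}\frac{\vert\boldsymbol{B}_t^{(i)}\vert}{t}.
\]

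Now each coordinate $\boldsymbol{B}^{(i)}$ is itself a standard one-dimensional Brownian motion, so by the strong law of large numbers for Brownian motion (as already used in Corollary~(\ref{Brownian drift})) we have $\vert\boldsymbol{B}_t^{(i)}\vert/t\to0$ almost surely for each of the finitely many indices $i$. Intersecting these finitely many almost sure events, the maximum $\max_i\vert\boldsymbol{B}_t^{(i)}\vert/t$ tends to $0$ almost surely, and the squeeze above forces $\Vert\boldsymbol{B}_t\Vert_p/t\to0$ almost surely. On this event the path $t\mapsto\Vert\boldsymbol{B}_t\Vert_p$ is a genuine continuous function of $t$, so Theorem~(\ref{Thm-1}) applies verbatim and yields $R_t(\Vert\boldsymbol{B}\Vert_p)/t\to0$, which is the claim.

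I do not expect a substantial obstacle: the argument is essentially a squeeze combined with the black-box Theorem~(\ref{Thm-1}). The only point deserving care is the passage from randomness to the deterministic theorems, namely isolating a single almost sure event (the intersection of the $n$ coordinatewise laws of large numbers) on which every hypothesis of Theorem~(\ref{Thm-1}) holds pathwise, and phrasing the norm comparison so that it covers both the quasi-norm regime $0<p<1$ and the limiting case $p=\infty$ simultaneously.
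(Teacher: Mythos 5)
Your proposal is correct and follows essentially the same route as the paper: establish $\Vert\boldsymbol{B}_t\Vert_p/t\rightarrow 0$ almost surely from the coordinatewise law of large numbers and then invoke the deterministic Theorem~(\ref{Thm-1}) pathwise. The only difference is cosmetic --- you unify the cases $p<\infty$ and $p=\infty$ through the single bound $\Vert x\Vert_p\leq n^{1/p}\Vert x\Vert_\infty$, whereas the paper handles finite $p$ by summing the $p$-th powers and $p=\infty$ by bounding the maximum by the sum.
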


\begin{proof}
Let $p<+\infty$. The law of large number implies that 
\[
\frac{\vert B_{t}^{(j)}\vert^{p}}{t^{p}}\overset{a.e}{\underset{t\rightarrow\infty}{\longrightarrow}}0
\]
for $j=1,...,n$. It follows that 
\[
\begin{alignedat}{1}\frac{\Vert\boldsymbol{B}\Vert_{p}}{t} & =\left(\frac{\vert B_{t}^{(1)}\vert^{p}}{t^{p}}+\frac{\vert B_{t}^{(2)}\vert^{p}}{t^{p}}+...+\frac{\vert B_{t}^{(n)}\vert^{p}}{t^{p}}\right)^{\frac{1}{p}}\\
 & \overset{a.e}{\underset{t\rightarrow\infty}{\longrightarrow}}0
\end{alignedat}
.
\]
For $p=+\infty$ we have 
\[
\begin{alignedat}{1}\frac{\Vert\boldsymbol{B}\Vert_{p}}{t} & \leq\left(\frac{\vert B_{t}^{(1)}\vert}{t}+\frac{\vert B_{t}^{(2)}\vert}{t}+...+\frac{\vert B_{t}^{(n)}\vert}{t}\right)\\
 & \overset{a.e}{\underset{t\rightarrow\infty}{\longrightarrow}}0
\end{alignedat}
.
\]
Therefore, in both cases 
\[
\frac{R_{t}(\Vert\boldsymbol{B}\Vert_{p})}{t}\overset{a.e}{\underset{t\rightarrow\infty}{\longrightarrow}}0.
\]
\end{proof}
\begin{cor}
The range of a Bessel process is negligible against the time. 
\end{cor}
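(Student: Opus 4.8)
The plan is to recognize that a Bessel process is merely a special instance of the family of processes already handled in the preceding proposition, so that no new argument is needed beyond a direct specialization. Recall from the definition given above that the $n$-dimensional Bessel process is $BES(n)=\Vert\boldsymbol{B}_{t}\Vert$, where $\Vert\cdot\Vert$ denotes the Euclidean norm and $\boldsymbol{B}_{t}$ is an $n$-dimensional Brownian motion. Since the Euclidean norm coincides with the $p^{th}$ norm $\Vert\cdot\Vert_{p}$ for the value $p=2$, and since $2$ lies in the admissible range $0<p\leq+\infty$, the process $BES(n)$ is nothing but $\Vert\boldsymbol{B}\Vert_{2}$.

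First I would simply invoke the preceding proposition with $p=2$. This yields at once
\[
\frac{R_{t}(BES(n))}{t}=\frac{R_{t}(\Vert\boldsymbol{B}\Vert_{2})}{t}\overset{a.e}{\underset{t\rightarrow\infty}{\longrightarrow}}0,
\]
which is precisely the statement that the range of the Bessel process is negligible against the time.

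There is no genuine obstacle in this step: the only point to verify is the identification of the Euclidean norm with the $2$-norm, which is immediate from the definitions. The substantive work has already been carried out in the preceding proposition, where the law of large numbers applied coordinatewise guarantees that $\Vert\boldsymbol{B}\Vert_{p}/t\to 0$ almost everywhere, and Theorem \ref{Thm-1} then transfers this decay from the process to its range. The corollary is therefore an immediate specialization and requires no further argument.
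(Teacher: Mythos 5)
Your proof is correct and matches the paper's intent exactly: the corollary is stated without proof precisely because it is the $p=2$ specialization of the preceding proposition, which is the identification you make. Nothing further is needed.
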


The following result illustrates the relation between a deterministic
function $f$ and its underlying supremum. 
\begin{prop}
\label{prop:10} Let $f,\psi:[0,\infty)\rightarrow\mathbb{R}$ be
two functions such that $\psi(t)$ is positively increasing to infinity
and let $\ell>0$. If 
\begin{equation}
\frac{f(t)}{\psi(t)}\underset{t\rightarrow\infty}{\longrightarrow}\ell\label{f/psi-2}
\end{equation}
then 
\[
\frac{\sup_{0\leq s\leq t}f_{s}}{f(t)}\underset{t\rightarrow\infty}{\longrightarrow}1.
\]
\end{prop}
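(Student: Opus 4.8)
The plan is to trap the running supremum between $f(t)$ itself and a quantity comparable to $\psi(t)$, then divide by $f(t)$ and let $\varepsilon\to 0$, in the same spirit as the proof of Theorem \ref{Thm}. To begin, note that since $\psi(t)>0$ and $f(t)/\psi(t)\to\ell>0$, we have $f(t)>0$ for all sufficiently large $t$, and in fact $f(t)\to+\infty$. The lower bound is then immediate: because $f(t)$ is one of the values over which the supremum is taken, $\sup_{0\le s\le t}f_s\ge f(t)$, so the ratio is at least $1$ once $f(t)>0$.

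For the upper bound, fix $\varepsilon\in(0,\ell)$. As in the proof of Theorem \ref{Thm}, assumption \eqref{f/psi-2} supplies a threshold $T>0$ with $\psi(s)(\ell-\varepsilon)\le f(s)\le\psi(s)(\ell+\varepsilon)$ for all $s\ge T$. I would then split the supremum as
\[
\sup_{0\le s\le t}f_s=\max\Bigl(\sup_{0\le s\le T}f_s,\ \sup_{T\le s\le t}f_s\Bigr).
\]
On the second interval, the monotonicity of $\psi$ gives $f(s)\le\psi(s)(\ell+\varepsilon)\le\psi(t)(\ell+\varepsilon)$ for $T\le s\le t$, hence $\sup_{T\le s\le t}f_s\le\psi(t)(\ell+\varepsilon)$. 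The first term $M_T:=\sup_{0\le s\le T}f_s$ is a constant independent of $t$, and since $\psi(t)\to\infty$ there is a $T^{\flat}\ge T$ beyond which $\psi(t)(\ell+\varepsilon)\ge M_T$, so that $\sup_{0\le s\le t}f_s\le\psi(t)(\ell+\varepsilon)$ for $t\ge T^{\flat}$.

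Combining the two bounds and using $f(t)\ge\psi(t)(\ell-\varepsilon)$, I obtain for $t\ge T^{\flat}$
\[
1\le\frac{\sup_{0\le s\le t}f_s}{f(t)}\le\frac{\psi(t)(\ell+\varepsilon)}{\psi(t)(\ell-\varepsilon)}=\frac{\ell+\varepsilon}{\ell-\varepsilon},
\]
and letting $\varepsilon\to 0$ squeezes the ratio to $1$, which is the claim. The one delicate point is the finiteness of $M_T=\sup_{0\le s\le T}f_s$: this requires $f$ to be bounded on compact intervals, which holds in the intended applications where $f$ is continuous (for instance a Brownian path). I expect this local-boundedness requirement, together with verifying $f(t)>0$ so that dividing preserves the inequalities, to be the main thing to pin down; the remainder is a routine $\varepsilon$-argument that parallels Theorem \ref{Thm}.
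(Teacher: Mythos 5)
Your proof is correct and follows essentially the same route as the paper's: lower-bound the ratio by $1$ since $f(t)$ is among the values in the supremum, split the supremum at the threshold $T$, bound the tail piece by $\psi(t)(\ell+\varepsilon)$ using the monotonicity of $\psi$, and let the contribution of $[0,T]$ wash out as $\psi(t)\to\infty$. The only cosmetic difference is that you absorb the constant $\sup_{0\leq s\leq T}f_s$ into $\psi(t)(\ell+\varepsilon)$ and conclude by a direct squeeze, whereas the paper bounds the max by a sum and passes to $\liminf$/$\limsup$; your remark about needing $f$ bounded on compacts applies equally to the paper's argument.
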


\begin{proof}
From the assumption, we know for every $\varepsilon>0$ there exists
a large enough $T$ such that, when $t>T$ 
\begin{equation}
\psi(t)(\ell-\varepsilon)\leq f(t)\leq\psi(t)(\ell+\varepsilon).\label{R_t-2}
\end{equation}
Also, 
\begin{equation}
1=\frac{f(t)}{f(t)}\leq\frac{\sup_{0\leq s\leq t}f_{s}}{f(t)}
\end{equation}
for $t>T$. On the other hand, we have
\begin{equation}
\frac{\sup_{0\leq s\leq t}f_{s}}{f(t)}\leq\frac{\sup_{0\leq s\leq T}f_{s}+\sup_{T\leq s\leq t}f_{s}}{f(t)}\leq\frac{\sup_{0\leq s\leq T}f_{s}+\psi(t)(\ell+\varepsilon)}{f(t)}
\end{equation}
Letting $t$ go to infinity, $\frac{\sup_{0\leq s\leq T}f_{s}}{f(t)}$
will converge to $0$ as $f(t)$ diverges to $+\infty$. $\frac{\psi(t)(\ell+\varepsilon)}{f(t)}$
will converge to $1+\frac{\varepsilon}{\ell}$. Therefore, 
\begin{equation}
1\leq\liminf_{t\to\infty}\frac{\sup_{0\leq s\leq t}f_{s}}{f(t)}\leq\limsup_{t\to\infty}\frac{\sup_{0\leq s\leq t}f_{s}}{f(t)}\leq1+\frac{\varepsilon}{\ell}
\end{equation}
The inequality (19) holds for any $\varepsilon>0$. Thus, 
\begin{equation}
\frac{\sup_{0\leq s\leq t}f_{s}}{f(t)}\underset{t\rightarrow\infty}{\longrightarrow}1.
\end{equation}
\end{proof}
\begin{cor}
Under the same assumptions of Proposition \eqref{prop:10}, we have
\[
\frac{\sup_{0\leq s\leq t}f_{s}}{\psi(t)}\underset{t\rightarrow\infty}{\longrightarrow}\ell
\]
and 
\[
\lim_{t\rightarrow+\infty}\frac{R_{t}(f)}{\psi(t)}=\lim_{t\rightarrow+\infty}\frac{R_{t}(\sup f)}{\psi(t)}=\ell.
\]
\end{cor}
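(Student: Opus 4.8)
The plan is to reduce both assertions to Proposition \ref{prop:10} together with the two range theorems already in hand, so that no new machinery is required.

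First I would establish the statement about the running supremum. Writing $M(t):=\sup_{0\leq s\leq t}f_s$, I factor
\[
\frac{M(t)}{\psi(t)}=\frac{M(t)}{f(t)}\cdot\frac{f(t)}{\psi(t)}.
\]
Proposition \ref{prop:10} gives $M(t)/f(t)\to 1$, while the hypothesis (\ref{f/psi-2}) gives $f(t)/\psi(t)\to\ell$; taking the product yields $M(t)/\psi(t)\to\ell$, which is exactly the first displayed limit. The factorization is legitimate for large $t$ because $f(t)/\psi(t)\to\ell>0$ and $\psi(t)>0$ force $f(t)>0$ eventually, so dividing by $f(t)$ is harmless in the limit.

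Next, for $R_t(f)/\psi(t)$ I would simply invoke Theorem \ref{Thm}: the hypothesis is precisely $f(t)/\psi(t)\to\ell\neq 0$ with $\psi$ positively increasing to infinity, so the conclusion $R_t(f)/\psi(t)\to|\ell|=\ell$ is immediate. The remaining piece, $R_t(\sup f)/\psi(t)\to\ell$, is where the only genuinely new (and light) observation enters. The key remark is that $M(t)=\sup_{0\leq s\leq t}f_s$ is non-decreasing by construction, so its range collapses to a difference of endpoint values:
\[
R_t(M)=\sup_{0\leq s\leq t}M(s)-\inf_{0\leq s\leq t}M(s)=M(t)-M(0).
\]
Dividing by $\psi(t)$ and combining the first assertion $M(t)/\psi(t)\to\ell$ with $M(0)/\psi(t)\to 0$ (since $\psi(t)\to\infty$) gives $R_t(M)/\psi(t)\to\ell$.

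I do not anticipate any serious obstacle, as the whole argument is bookkeeping resting on results already proved. The single point worth stating carefully is that the range of a monotone function reduces to an endpoint difference; this is what lets the computation for $\sup f$ bypass any second appeal to Theorem \ref{Thm} and instead follow directly from the first part of the corollary.
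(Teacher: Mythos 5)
Your argument is correct and is essentially the derivation the paper intends: the corollary is stated without proof as an immediate consequence of Proposition \ref{prop:10}, the hypothesis (\ref{f/psi-2}), and Theorem \ref{Thm}. Your factorization $\frac{M(t)}{\psi(t)}=\frac{M(t)}{f(t)}\cdot\frac{f(t)}{\psi(t)}$ and the observation that the range of the non-decreasing function $M(t)=\sup_{0\leq s\leq t}f_s$ collapses to $M(t)-M(0)$ are exactly the right elementary steps, and they fill in the bookkeeping the paper leaves implicit.
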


In other words, the four functions $f,\sup_{0\leq s\leq t}f_{s},R_{t}(f),R_{t}(\sup f)$
have the same long run behavior under the assumptions of proposition
\eqref{prop:10}. Note that proposition \eqref{prop:10} is still
valid when $\ell=0$ provided that $f$ becomes eventually positive after certain
time. The following result is a consequence of proposition \eqref{prop:10}. It concerns the underlying supremum process of a drifted Brownian motion.
\begin{prop}\label{sup}
Let $\widetilde{V}_{t}^{\eta}=\sup_{0\leq s\leq t}V_{s}^{\eta}$. Then  
\end{prop}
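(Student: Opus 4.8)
The plan is to obtain this statement as an immediate pathwise consequence of Proposition \eqref{prop:10} and its corollary, applied to the sample paths of $V^{\eta}$. The crucial point is that Proposition \eqref{prop:10} is \emph{purely deterministic}: it concerns a fixed continuous function $f$, and the randomness will enter only through the choice of path. Concretely, I expect the conclusion to read $\frac{\widetilde{V}_{t}^{\eta}}{t}\overset{a.e}{\underset{t\rightarrow\infty}{\longrightarrow}}\eta$ (together with the normalized form $\frac{\widetilde{V}_{t}^{\eta}}{V_{t}^{\eta}}\overset{a.e}{\underset{t\rightarrow\infty}{\longrightarrow}}1$), both of which come out of the same argument.

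First I would recall, exactly as in the proof of Corollary \eqref{Brownian drift}, that the strong law of large numbers for Brownian motion gives $\frac{B_{t}}{t}\overset{a.e}{\underset{t\rightarrow\infty}{\longrightarrow}}0$, and hence $\frac{V_{t}^{\eta}}{t}\overset{a.e}{\underset{t\rightarrow\infty}{\longrightarrow}}\eta$. Let $\Omega_{0}$ denote the full-measure event on which this convergence holds; since $\eta>0$, on $\Omega_{0}$ the limit is a strictly positive number, which is precisely what Proposition \eqref{prop:10} requires. Note also that the running supremum $\widetilde{V}_{t}^{\eta}$ is a bona fide random variable, since $V^{\eta}$ has continuous paths and so $\widetilde{V}_{t}^{\eta}$ coincides with the supremum taken over rational times.

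Next I would fix $\omega\in\Omega_{0}$ and regard $t\mapsto V_{t}^{\eta}(\omega)$ as a deterministic continuous function. Taking $f=V^{\eta}(\omega)$, $\psi(t)=t$ and $\ell=\eta>0$, the hypothesis \eqref{f/psi-2} is satisfied, so Proposition \eqref{prop:10} yields $\frac{\widetilde{V}_{t}^{\eta}(\omega)}{V_{t}^{\eta}(\omega)}\underset{t\rightarrow\infty}{\longrightarrow}1$ and its corollary yields $\frac{\widetilde{V}_{t}^{\eta}(\omega)}{t}\underset{t\rightarrow\infty}{\longrightarrow}\eta$. Since these deterministic limits hold for every $\omega$ in the full-measure event $\Omega_{0}$, the claimed almost-sure convergences follow at once. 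I expect the only point requiring genuine care to be this transfer from the deterministic to the probabilistic statement: one must observe that the SLLN event is exactly the event on which the hypotheses of Proposition \eqref{prop:10} hold pathwise, so that the deterministic conclusion can legitimately be invoked $\omega$ by $\omega$ and then promoted to convergence almost everywhere. Everything else is a direct citation of the preceding deterministic results, with no further estimates needed.
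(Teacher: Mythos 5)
Your proposal is correct and matches the paper's (implicit) argument: the paper offers no separate proof of Proposition \eqref{sup}, presenting it exactly as a pathwise application of Proposition \eqref{prop:10} and its corollary to $f=V^{\eta}(\omega)$, $\psi(t)=t$, $\ell=\eta$, with the strong law of large numbers for Brownian motion supplying the hypothesis on an event of full measure. The only discrepancy is that the second conclusion of the proposition (truncated in the statement you were given) is $\frac{R_{t}(\widetilde{V}^{\eta})}{t}\overset{a.e}{\underset{t\rightarrow\infty}{\longrightarrow}}\eta$ rather than $\widetilde{V}^{\eta}_{t}/V^{\eta}_{t}\rightarrow 1$, but this too follows at once from the part of the corollary you already cite, namely $R_{t}(\sup f)/\psi(t)\rightarrow\ell$, so nothing in your argument needs to change.
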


\[
\frac{\widetilde{V}_{t}^{\eta}}{t}\overset{a.e}{\underset{t\rightarrow\infty}{\longrightarrow}}\eta
\]
and 
\[
\frac{R_{t}(\widetilde{V}^{\eta})}{t}\overset{a.e}{\underset{t\rightarrow\infty}{\longrightarrow}}\eta.
\]
\\
\\
A corollary of proposition (\ref{sup}) is that the supremum of a Brownian motion, as well as its range are negligible against the time evolution. However, it would be interesting to derive estimates for the range like the law of iterated logarithm for Brownian motion, and on top of that investigate whether such potential properties are deterministic or not.

\subsection*{Acknowledgments}
The authors would like to thank Greg Markowsky for helpful communications.

\bibliographystyle{plain}
\bibliography{Maher}

\end{document}